\newtheorem{theorem}{Theorem}
\newtheorem{lemma}{Lemma}
\title{On a problem of Neumann}
\author{Michael Tait\thanks{Department of Mathematics, University of California San Diego. \texttt{mtait@math.ucsd.edu}}}
\date{}
\begin{document}

\maketitle
\abstract{A conjecture widely attributed to Neumann is that all finite non-desarguesian projective planes contain a Fano subplane. In this note, we show that any finite projective plane of even order which admits an orthogonal polarity contains a Fano subplane. The number of planes of order less than $n$ previously known to contain a Fano subplane was $O(\log n)$, whereas the number of planes of order less than $n$ that our theorem applies to is not bounded above by any polynomial in $n$.\\
\textbf{Mathematics Subject Classification:} 05C99, 51A35, 51A45}

\section{Introduction}
A fundamental question in incidence geometry is about the subplane structure of projective planes.  There are relatively few results concerning when a projective plane of order $k$ is a subplane of a projective plane of order $n$. Neumann \cite{Neumann1954} found Fano subplanes in certain Hall planes, which led to the conjecture that every finite non-desarguesian plane contains $PG(2,2)$ as a subplane (this conjecture is widely attributed to Neumann, though it does not appear in her work). 

Johnson \cite{Johnson2009} and Fisher and Johnson \cite{FisherJohnson2010} showed the existence of Fano subplanes in many translation planes. Petrak \cite{Petrak2010} showed that Figueroa planes contain $PG(2,2)$ and Caliskan and Petrak \cite{CaliskanPetrak2013} showed that Figueroa planes of odd order contain $PG(2,3)$. Caliskan and Moorhouse \cite{CaliskanMoorhouse2011} showed that all Hughes planes contain $PG(2,2)$ and that the Hughes plane of order $q^2$ contains $PG(2,3)$ if $q\equiv 5 \pmod{6}$. We prove the following.

\begin{theorem}\label{main theorem}
Let $\Pi$ be a finite projective plane of even order which admits an orthogonal polarity. Then $\Pi$ contains a Fano subplane.
\end{theorem}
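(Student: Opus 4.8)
The plan is to exploit the rigid structure that an orthogonal polarity imposes in even order. Let $n$ be the order of $\Pi$ and $\pi$ the polarity, and recall that in even order the absolute points of $\pi$ (the points $P$ with $P\in\pi(P)$) are precisely the $n+1$ points of a line $\ell$. Using only the polarity axioms (in particular $P\in\pi(Q)\iff Q\in\pi(P)$) together with the injectivity of $\pi$, I would first record two facts. The point $N:=\pi(\ell)$ does not lie on $\ell$: otherwise $\pi(P)$ would be the line $PN=\ell$ for every $P\in\ell\setminus\{N\}$, contradicting injectivity. And for $P\in\ell$ the polar $\pi(P)$ is the line joining $P$ and $N$, since it passes through $P$ ($P$ is absolute) and through $N$ ($P\in\ell=\pi(N)$).

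The heart of the proof is that every self-polar triangle of $\pi$ yields a Fano subplane. Call three non-collinear points $X,Y,Z$ a \emph{self-polar triangle} if they are pairwise conjugate, i.e.\ $Y\in\pi(X)$, $Z\in\pi(Y)$, $X\in\pi(Z)$; equivalently $\pi(X)=YZ$, $\pi(Y)=XZ$, $\pi(Z)=XY$. (The vertices are automatically non-absolute: if $X\in\pi(X)$ then $X,Y,Z\in\pi(X)$ would be collinear.) Given such a triangle, set
\[
A:=\pi(Z)\cap\ell,\qquad B:=\pi(Y)\cap\ell,\qquad C:=\pi(X)\cap\ell,\qquad N:=\pi(\ell).
\]
I claim the seven points $A,B,C,X,Y,Z,N$ and the seven lines $\ell=\pi(N),\ \pi(A),\ \pi(B),\ \pi(C),\ \pi(X),\ \pi(Y),\ \pi(Z)$ form a subplane isomorphic to $PG(2,2)$. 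The verification runs in three steps. First, the seven points are distinct: each possible coincidence collapses quickly — for instance $X,Y,Z,N$ all lie off $\ell$ while $A,B,C$ lie on it, and $N\ne X$ since $\pi(N)=\ell\ne YZ=\pi(X)$. Second, I would compute for each of the seven lines exactly which of the seven points it contains: e.g.\ $\pi(X)=YZ$ carries $Y$, $Z$ and $C=\pi(X)\cap\ell$ (and nothing else among the seven), while $\pi(A)$ carries $A$ (absolute), $N$ (since $A\in\ell=\pi(N)$) and $Z$ (since $A\in\pi(Z)$). Doing this for all seven lines produces the collinear triples $\{A,B,C\}$, $\{A,N,Z\}$, $\{B,N,Y\}$, $\{C,N,X\}$, $\{Y,Z,C\}$, $\{X,Z,B\}$, $\{X,Y,A\}$, which is exactly the incidence pattern of $PG(2,2)$; and the seven lines are distinct because $\pi$ is a bijection and the seven poles $N,A,B,C,X,Y,Z$ are distinct. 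Third, since these seven lines already cover all $\binom72$ pairs of the seven points, the line of $\Pi$ through any two of them is one of the seven and meets the point set in exactly three points; the configuration is therefore closed and is a Fano subplane.

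It remains to produce one self-polar triangle. Pick a non-absolute point $X\ne N$ (there are $n^2-1\ge 3$ of them). The polar $m:=\pi(X)$ is a line other than $\ell$, so it meets $\ell$ in a single point and hence contains $n\ge 2$ non-absolute points; pick one, $Y$. As $X\notin\pi(X)$ we have $Y\ne X$, so (using $\pi(m)=\pi(\pi(X))=X$) $\pi(Y)\ne m$, and $Z:=\pi(Y)\cap m$ is a single point, with $Z\ne Y$ (else $Y$ would be absolute). Then $Y,Z\in\pi(X)$ and $X,Z\in\pi(Y)$, so $X,Y,Z$ are pairwise conjugate; they are not collinear, since otherwise $YZ=m=\pi(X)$ would force $X\in\pi(X)$. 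Hence $\{X,Y,Z\}$ is a self-polar triangle, and the theorem follows.

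I expect the only delicate point to be the bookkeeping in the second step — checking that each of the seven lines meets the seven points in exactly three of them, so that one really obtains a copy of $PG(2,2)$ rather than a degenerate configuration. The rest is short, and the argument uses the even characteristic in an essential way: it is exactly the existence of the line $\ell$ of absolute points and of its pole $N\notin\ell$ that supplies the extra point and the extra lines needed to close a self-polar triangle into a Fano plane.
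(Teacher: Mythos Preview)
Your argument is correct, and at the structural level it is the same as the paper's: both proofs reduce the problem to finding a \emph{self-polar triangle} (in the paper's graph-theoretic language, a triangle in the polarity graph $G^o_\pi$) and then observe that such a triangle, together with the pole $N=\pi(\ell)$ and the three absolute points where its sides meet $\ell$, closes up to a copy of $PG(2,2)$. Your seven points $N,A,B,C,X,Y,Z$ are exactly the paper's $p,a_i,a_j,a_k,v_i,v_j,v_k$. The genuine difference is in how the triangle is produced. The paper uses a counting argument: it shows there are at least $(n^3-n)/6$ triangles in $G^o_\pi$, subtracts the at most $(n+1)(n/2-1)$ triangles through an absolute vertex, and concludes that a good triangle exists. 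You instead construct one directly: pick any non-absolute $X\neq N$, any non-absolute $Y\in\pi(X)$, and set $Z=\pi(X)\cap\pi(Y)$; the non-absoluteness of $X$ and $Y$ forces $X,Y,Z$ to be pairwise conjugate and non-collinear. Your route is shorter and more elementary, and it makes transparent exactly where even order enters (only through Baer's theorem giving the line $\ell$ and its off-line pole $N$). What the paper's counting buys in exchange is a quantitative statement: it yields $\Omega(n^3)$ distinct Fano subplanes, which the paper notes in its concluding remarks as a possible handle on finding larger subplanes.
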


Ganley \cite{Ganley1972} showed that a finite semifield plane admits an orthogonal polarity if and only if it can be coordinatized by a commutative semifield. A result of Kantor \cite{Kantor2003} implies that the number of nonisomorphic planes of order $n$ a power of $2$ that can be coordinatized by a commutative semifield is not bounded above by any polynomial in $n$. Thus, Theorem \ref{main theorem} applies to many projective planes.

\section{Proof of Theorem \ref{main theorem}}
The proof of Theorem \ref{main theorem} is graph theoretic, and we collect some definitions and results first. Let $\Pi = (\mathcal{P}, \mathcal{L}, \mathcal{I})$ be a projective plane of order $n$. We write $p\in l$ or say $p$ is on $l$ if $(p,l)\in \mathcal{I}$. Let $\pi$ be a polarity of $\Pi$. That is, $\pi$ maps points to lines and lines to points, $\pi^2$ is the identity function, and $\pi$ respects incidence. Then one may construct the polarity graph $G^o_\pi$ as follows. $V(G^o_\pi) = \mathcal{P}$ and $p\sim q$ if and only if $p \in \pi(q)$. That is, the neighborhood of a vertex $p$ is the line $\pi(p)$ that $p$ gets mapped to under the polarity. If $p \in \pi(p)$, then $p$ is an {\em absolute point} and the vertex $p$ will have a loop on it. A polarity is {\em orthogonal} if exactly $n+1$ points are absolute. We note that as neighborhoods in the graph represent lines in the geometry, each vertex in $G^o_\pi$ has exactly $n+1$ neighbors (if $v$ is an absolute point, it has exactly $n$ neighbors other than itself). We provide proofs of the following preliminary observations for completeness. 

\begin{lemma}\label{observations}
Let $\Pi$ be a projective plane with polarity $\pi$, and $G^o_\pi$ be the associated polarity graph.
\begin{enumerate}[(a)]
\item For all $u,v\in V(G^o_\pi)$, $u$ and $v$ have exactly $1$ common neighbor. \label{partone}
\item $G^o_\pi$ is $C_4$ free. \label{parttwo}
\item If $u$ and $v$ are two absolute points of $G^o_\pi$, then $u\not\sim v$. \label{partthree}
\item If $v\in V(G^o_\pi)$, then the neighborhood of $v$ induces a graph of maximum degree at most $1$. \label{partfour}
\item Let $e = uv$ be an edge of $G^o_\pi$ such that neither $u$ nor $v$ is an absolute point. Then $e$ lies in a unique triangle in $G^o_\pi$. \label{partfive}
\end{enumerate}
\end{lemma}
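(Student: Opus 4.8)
My plan is to derive all five statements from the two incidence axioms of a projective plane---any two distinct points lie on a unique common line, and any two distinct lines meet in a unique common point---together with two elementary facts about the construction: $\pi$ is a bijection between $\mathcal{P}$ and $\mathcal{L}$, and adjacency in $G^o_\pi$ is symmetric (applying $\pi$ to $p\in\pi(q)$ gives $q\in\pi(p)$, since $\pi$ respects incidence). Throughout I would use the dictionary between graph and geometry: the neighborhood of a vertex $v$ is, as a set of points, the line $\pi(v)$, and a common neighbor of two vertices $u$ and $v$ is precisely a point incident with both lines $\pi(u)$ and $\pi(v)$.

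For (a) I would take $u\neq v$, note that $\pi(u)\neq\pi(v)$ by injectivity of $\pi$, and conclude that these two distinct lines have exactly one common point, which is then the unique common neighbor of $u$ and $v$. Part (b) follows immediately: a $4$-cycle on distinct vertices $a,b,c,d$ would exhibit $b$ and $d$ as two distinct common neighbors of $a$ and $c$. For (c) I would argue by contradiction: if $u\neq v$ are absolute and $u\sim v$, then $\pi(u)$ contains $u$ (by absoluteness) and $v$ (since $u\sim v$), and likewise $\pi(v)$ contains both $u$ and $v$; uniqueness of the line joining $u$ and $v$ forces $\pi(u)=\pi(v)$, hence $u=v$, a contradiction.

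Part (d) I would also obtain from (b): if some $w$ in the neighborhood of $v$ had two distinct neighbors $x,y$ lying in that neighborhood, then $v,x,w,y$ would span a $4$-cycle. The one thing to verify is that these four vertices are genuinely distinct---in particular $w\neq v$---and this is the point at which one must be careful about the loop at an absolute vertex: the induced subgraph on a neighborhood should be understood not to contain the central vertex $v$, since otherwise (d) would already fail for an absolute $v$. For (e) I would invoke (a): since $e=uv$ is an edge it joins distinct vertices ($e$ cannot be a loop, as loops occur only at absolute vertices, and neither endpoint is absolute), so $u$ and $v$ have a unique common neighbor $w$; a triangle containing $e$ is exactly such a common neighbor that differs from both $u$ and $v$, and since a vertex is adjacent to itself only when it is absolute, the hypothesis on $u$ and $v$ guarantees $w\notin\{u,v\}$. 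Hence $uvw$ is the one triangle through $e$.

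I do not anticipate a substantive obstacle, as each part reduces to a one- or two-line argument from (a) or from the incidence axioms. The only place that genuinely needs care is the treatment of absolute points and their loops: one must fix the convention that a loop at $v$ does not make $v$ count among its own neighbors, since that degenerate configuration is precisely what the exclusion of $v$ in (d) and the non-absolute hypotheses in (e) are there to handle.
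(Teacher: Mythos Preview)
Your argument is correct and matches the paper's approach almost exactly: deduce (a) from the unique-intersection axiom for lines, then pull (b), (d), and (e) from (a), with (d) routed through (b). The only cosmetic difference is in (c), where the paper simply remarks that it follows from (a) as well (two adjacent absolute vertices would each be a common neighbor of the pair, giving two), whereas you argue directly from the uniqueness of the line through two points; your explicit attention to the loop conventions in (d) and (e) is more careful than the paper's terse treatment.
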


\begin{proof}
To prove (\ref{partone}), let $u$ and $v$ be an arbitrary pair of vertices in $V(G^o_\pi)$. Because $\Pi$ is a projective plane, $\pi(u)$ and $\pi(v)$ meet in a unique point. This point is the unique vertex in the intersection of the neighborhood of $u$ and the neighborhood of $v$. (\ref{parttwo}) and (\ref{partthree}) follow from (\ref{partone}).

To prove (\ref{partfour}), if there is a vertex of degree at least $2$ in the graph induced by the neighborhood of $v$, then $G^o_\pi$ contains a $4$-cycle, a contradiction by (\ref{parttwo}).

Finally, let $u\sim v$ and neither $u$ nor $v$ an absolute point. Then by (\ref{partone}) there is a unique vertex $w$ adjacent to both $u$ and $v$. Now $uvw$ is the purported triangle, proving (\ref{partfive}).
\end{proof}

\begin{proof}[Proof of Theorem \ref{main theorem}]
We will now assume $\Pi$ is a projective plane of even order $n$, that $\pi$ is an orthogonal polarity, and that $G^o_\pi$ is the corresponding polarity graph (including loops). Since $n$ is even and $\pi$ is orthogonal, a classical theorem of Baer (\cite{Baer1946}, see also Theorem 12.6 in \cite{HughesPiper}) says that the $n+1$ absolute points under $\pi$ all lie on one line. Let $a_1,\ldots, a_{n+1}$ be the set of absolute points and let $l$ be the line containing them. Then there is some $p \in \mathcal{P}$ such that $\pi(l) = p$. This means that in $G^o_\pi$, the neighborhood of $p$ is exactly the set of points $\{a_1,\ldots, a_{n+1}\}$. For $1\leq i \leq n+1$, let $N_i$ be the neighborhood of $a_i$. Then by Lemma \ref{observations}.\ref{parttwo}, $N_i\cap N_j = \emptyset$ if $i\not= j$. Further, counting gives that 
\begin{equation}\label{vertexSet}
V(G^o_\pi) = p \cup \left(\bigcup_{i=1}^{n+1} a_i\right) \cup \left( \bigcup_{i=1}^{n+1} N_i \right).
\end{equation}
Let $ER^o_2$ be the graph on $7$ points which is the polarity graph (with loops) of $PG(2,2)$ under the orthogonal polarity. 

\begin{center}
\begin{figure}\caption{$ER^o_2$}
\centering
\includegraphics[height=6cm]{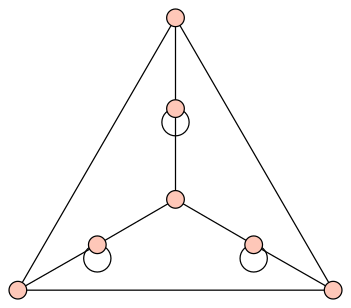}
\end{figure}
\end{center}

\begin{lemma}
If $ER^o_2$ is a subgraph of $G^o_\pi$, then $\Pi$ contains a Fano subplane.
\end{lemma}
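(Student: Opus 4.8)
The plan is to show that a copy of $ER^o_2$ sitting inside $G^o_\pi$ forces a genuine Fano subplane of $\Pi$ --- that is, a set of $7$ points and $7$ lines of $\Pi$ whose induced incidence structure is $PG(2,2)$. The natural correspondence is: the $7$ vertices of $ER^o_2$ give $7$ points of $\Pi$, and applying the polarity $\pi$ to each of these points gives $7$ lines of $\Pi$. Recall that in the polarity graph, $u \sim v$ means exactly $u \in \pi(v)$ (equivalently $v \in \pi(u)$, since $\pi$ is a polarity). So the adjacency pattern of $ER^o_2$ translates directly into incidences in $\Pi$ between the $7$ chosen points and the $7$ lines $\pi(\text{point})$. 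First I would fix an explicit description of $ER^o_2$: label its vertices $v_1,\dots,v_7$, list which $v_i v_j$ are edges and which $v_i$ carry loops, so that $\{v_1,\dots,v_7\}$ together with the adjacency relation is literally the incidence structure of $PG(2,2)$ under its orthogonal polarity (three absolute points forming a line, etc.). This is a finite, checkable picture.

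Next I would let $\phi\colon ER^o_2 \hookrightarrow G^o_\pi$ be the given embedding, write $w_i = \phi(v_i) \in \mathcal P$, and let $m_i = \pi(w_i) \in \mathcal L$. The claim is that the point set $\{w_1,\dots,w_7\}$ and the line set $\{m_1,\dots,m_7\}$ form a Fano subplane. What needs checking is: (i) the $w_i$ are distinct (immediate, $\phi$ is injective) and the $m_i$ are distinct (since $\pi$ is a bijection); (ii) $w_i \in m_j$ in $\Pi$ holds for exactly the pairs $(i,j)$ prescribed by $PG(2,2)$ --- here I must be careful about loops: $w_i \in m_i$ iff $w_i \in \pi(w_i)$ iff $w_i$ is an absolute point iff $v_i$ had a loop in $ER^o_2$, which matches $PG(2,2)$'s own absolute points; for $i \ne j$, $w_i \in m_j \iff w_i \sim w_j$ in $G^o_\pi \iff v_i \sim v_j$ in $ER^o_2$ because $\phi$ is an embedding (preserves and reflects adjacency among the chosen vertices); and the diagonal/off-diagonal cases together reconstruct the full incidence matrix of $PG(2,2)$. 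Once (i) and (ii) hold, the sub-incidence-structure $(\{w_i\},\{m_j\},\in)$ is isomorphic to $PG(2,2)$ as an incidence structure, hence every point is on $3$ of the lines, every line contains $3$ of the points, any two of the points lie on a common one of the lines, any two of the lines meet in one of the points --- i.e., it is a Fano subplane of $\Pi$ in the usual sense.

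The main obstacle I anticipate is item (ii)'s subtlety about whether ``subgraph'' means induced subgraph, and about loops. If $ER^o_2$ is only a (not necessarily induced) subgraph of $G^o_\pi$, then $v_i \not\sim v_j$ in $ER^o_2$ does not a priori forbid $w_i \sim w_j$ in $G^o_\pi$, which could create extra incidences and destroy the Fano structure. The resolution is a counting/rigidity argument: $PG(2,2)$ is a \emph{maximal} such configuration --- in any $C_4$-free graph (which $G^o_\pi$ is, by Lemma~\ref{observations}(\ref{parttwo})), a $7$-vertex subgraph containing all $9$ edges of $ER^o_2$ cannot contain any further edge among those $7$ vertices, because adding any edge to $ER^o_2$ creates a $C_4$. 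So I would verify (a short check on the fixed picture) that $ER^o_2$ is edge-maximal subject to being $C_4$-free on those $7$ vertices, which upgrades ``subgraph'' to ``induced subgraph'' for free; similarly, no $v_i$ without a loop can acquire a loop in $G^o_\pi$ because an absolute point adjacent to two non-absolute points already present would again force a $C_4$, or more directly because the loop/non-loop status is determined and the three absolute vertices of $ER^o_2$ already exhaust what is consistent with $C_4$-freeness. Having pinned down that the induced structure on $\{w_1,\dots,w_7\}$ is exactly $ER^o_2$, the translation in the previous paragraph finishes the proof.
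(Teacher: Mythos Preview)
Your approach is the same as the paper's: take the seven vertices, apply $\pi$ to obtain seven lines, and verify that the resulting point--line incidence structure is $PG(2,2)$. The paper compresses all of this into the single sentence ``one can check directly that it satisfies the axioms of a projective plane,'' whereas you spell out the translation $w_i \in m_j \iff w_i \sim w_j$ and worry about whether the copy of $ER^o_2$ is induced.

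Your extra care about the induced/non-induced distinction is legitimate and your resolution via $C_4$-freeness is correct: every non-edge of $ER^o_2$ completes a $4$-cycle if added, so any subgraph copy is automatically induced on its non-loop edges. Your argument for why no additional loops can appear is stated a bit loosely, though; the clean way to finish it is to invoke Lemma~\ref{observations}(\ref{partthree}) directly: each non-loop vertex of $ER^o_2$ is adjacent to at least one loop vertex, and since loops in the embedded copy correspond to genuine absolute points of $G^o_\pi$, Lemma~\ref{observations}(\ref{partthree}) forbids the adjacent non-loop vertex from also being absolute. With that tweak your argument is complete and in fact more careful than the paper's.
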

\begin{proof}
Let $v_1,\ldots, v_7$ be the vertices of a subgraph $ER^o_2$ of $G^o_\pi$. Let $l_i = \pi(v_i)$ for $1\leq i\leq 7$. Then the lines $l_1 ,\ldots , l_7$ in $\Pi$ restricted to the points $v_1,\ldots , v_7$ form a point-line incidence structure, and one can check directly that it satisfies the axioms of a projective plane.
\end{proof}
Thus, it suffices to find $ER^o_2$ in $G^o_\pi$. To find $ER^o_2$ it suffices to find distinct $i,j,k$ such that there are $v_i\in N_i$, $v_j\in N_j$, and $v_k\in N_k$ where $v_iv_jv_k$ forms a triangle in $G^o_\pi$, for then the points $p, a_i, a_j, a_k, v_i, v_j, v_k$ yield the subgraph $ER^o_2$. Now note that for all $i$, and for $v\in N_i$, $v$ has exactly $n$ neighbors that are not absolute points. There are $n+1$ choices for $i$ and $n-1$ choices for $v\in N_i$. As each edge is counted twice, this yields
\[
\frac{n(n-1)(n+1)}{2}
\]
edges with neither end an absolute point. By Lemma \ref{observations}.\ref{partfive}, there are at least 
\[
\frac{n^3-n}{6}
\]
triangles in $G^o_\pi$. By Lemma \ref{observations}.\ref{partthree}, there are no triangles incident with $p$, by Lemma \ref{observations}.\ref{parttwo}, there are no triangles that have more than one vertex in $N_i$ for any $i$, and by Lemma \ref{observations}.\ref{partfour} there are at most $\lfloor\frac{n-1}{2}\rfloor = \frac{n}{2} - 1$ triangles incident with $a_i$ for each $i$. Therefore, by \eqref{vertexSet}, there are at least 
\[
\frac{n^3 - n}{6} - (n+1)\left(\frac{n}{2}-1\right)
\]
copies of $ER^o_2$ in $G^o_\pi$. This expression is positive for all even natural numbers $n$.
\end{proof}

\section{Concluding Remarks}
First, we note that the proof of Theorem \ref{main theorem} actually implies that there are $\Omega\left(n^3\right)$ copies of $PG(2,2)$ in any plane satisfying the hypotheses, and echoing Petrak \cite{Petrak2010}, perhaps one could find subplanes of order $4$ for $n$ large enough. We also note that it is crucial in the proof that the absolute points form a line. When $n$ is odd, the proof fails (as it must, since our proof does not detect if $\Pi$ is desarguesian or not).

\section*{Acknowledgments}
The author would like to thank Gary Ebert and Eric Moorhouse for helpful comments.

\bibliographystyle{plain}
\bibliography{bib}

\end{document}